\theoremstyle{definition}
\newtheorem{definition}{Definition}
\theoremstyle{theorem}
\newtheorem{lemma}[definition]{Lemma}
\newtheorem{theorem}[definition]{Theorem}
\theoremstyle{remark}
\newtheorem{remark}[definition]{Remark}
\newtheorem{question}[definition]{Question}
\newtheorem{example}[definition]{Example}
\newtheorem{notation}[definition]{Notation}
\DeclareMathSymbol{\shortminus}{\mathbin}{AMSa}{"39}
\def\PP{\mathbb{P}}
\def\AA{\mathcal{A}}
\def\HH{\mathcal{H}}
\def\L2{\mathrm{L}^2}
\def\pr{\mathrm{pr}}
\def\QQ{\mathbb{Q}}
\def\BB{\mathcal{B}}
\def\AA{\mathcal{A}}
\def\id{\mathrm{id}}
\def\GG{\mathcal{G}}
\def\FF{\mathcal{F}}
 \def\L2{\mathrm{L}^2}
 \def\ZZn{\mathbb{Z}_{\leq 0}}
\begin{document}
\title{A nonclassical solution to a classical SDE and a converse to Kolmogorov's zero-one law}

\author{Matija Vidmar}
\address{Department of Mathematics, Faculty of Mathematics and Physics, University of Ljubljana, Slovenia}
\email{matija.vidmar@fmf.uni-lj.si}

\begin{abstract}
For a discrete-negative-time discrete-space SDE, which admits no strong solution in the classical sense, a weak solution is constructed that is a (necessarily nonmeasurable) non-anticipative function of the driving i.i.d. noise. The result highlights the strong r\^ole measurability plays in (non-discrete) probability. En route one --- quite literally --- stumbles upon a  converse to the celebrated Kolmogorov's zero-one law for sequences with independent values. 
\end{abstract}

\thanks{Financial support from the Slovenian Research Agency is acknowledged (programme No. P1-0402). 
}

\keywords{Stochastic  equations; equiprobable random signs; non-anticipative weak solutions; nonmeasurable sets; Kolmogorov's zero-one law}

\subjclass[2020]{Primary: 60G05. Secondary: 60G50} 

\maketitle

\section{Introduction and main results}
 All filtrations and processes in this section are indexed by $\ZZn$; the natural filtration of a process $Z$ is denoted $\FF^Z$: $\FF^Z_n:=\sigma(Z_m:m\in \mathbb{Z}_{\leq n})$ for $n\in \ZZn$. Consider the following classical (simplest non-trivial) discrete-negative-time discrete-space SDE (stochastic difference equation): 
\begin{equation}\label{main}
X_n=X_{n-1}\xi_n,\quad n\in \ZZn,
\end{equation}
where $(\xi_n)_{n\in\ZZn}$ is a sequence of independent equiprobable random signs [for each $n\in \ZZn$, $\xi_n$ is $\{-1,1\}$-valued and $\PP(\xi_n=1)=\frac{1}{2}$] and where $(X_n)_{n\in \ZZn}$ is the unknown process that also takes its values in $\{-1,1\}$. It is paradigmatic \cite[Eq.~(1)]{yano2015} and indeed intimately related \cite[Eqs.~(25) and (26)]{yano2015} to Tsirelson's ``celebrated and mysterious'' stochastic differential equation  \cite[V.18, p. 155]{rogers2000diffusions}. Let us recall the most conspicuous features of \eqref{main}.

\begin{definition}\label{definition:classical}
\begin{inparaenum}[(a)]
\item\label{a} A weak solution to \eqref{main} consists of a filtered probability space $(\Omega,\GG,\PP,\FF)$ and of a pair  $(\xi,X)$ of $\FF$-adapted $\{-1,1\}$-valued processes defined thereon such that \eqref{main} holds and such that for each $i\in\ZZn$, $\xi_i$ is an equiprobable random sign independent of $\FF_{i-1}$. \item\label{b} A strong solution to \eqref{main} is a weak solution, as in \eqref{a}, for which $\FF^X$ is included in $\FF^\xi$. 
\item Uniqueness in law holds for \eqref{main} if  in any  weak solution from \eqref{a} the process $X$ has the same law.
\end{inparaenum}
\end{definition} 

$(\bullet_1)$ Take a weak solution of Definition~\ref{definition:classical}\eqref{a}. For any $n\in \ZZn$, $\PP(X_n=1)=\PP(X_{n-1}=-1,\xi_n=-1)+\PP(X_{n-1}=1,\xi_n=1)=\PP(X_{n-1}=-1)\PP(\xi_n=-1)+\PP(X_{n-1}=1)\PP(\xi_n=1)=\frac{1}{2}(\PP(X_{n-1}=-1)+\PP(X_{n-1}=1))=\frac{1}{2}$. Therefore, because of \eqref{main} again and because the $\xi_n$, $n\in \mathbb{Z}_{\leq 0}$, are independent equiprobable random signs relative to $\FF$ to which $X$ is adapted, the $X_n$, $n\in \ZZn$, are independent equiprobable random signs also. There is uniqueness in law for \eqref{main}. In particular, by Kolmogorov's zero-one law, the tail $\sigma$-field $\FF^X_{-\infty}:=\cap_{m\in \ZZn}\FF^X_m$ of $X$ in any weak solution of Definition~\ref{definition:classical}\eqref{a} is always trivial (even if one were to complete the filtration $\FF^X$ before taking the intersection of its members, of course).

$(\bullet_2)$  On the other hand, let, on some probability space $(\Omega,\GG,\PP)$, $X=(X_n)_{n\in\ZZn}$ be a sequence of independent equiprobable random signs, $\FF=\FF^X$ its natural filtration, and define the process $\xi=(\xi_n)_{n\in \ZZn}$ so that it  satisfies \eqref{main}. It gives a weak solution of \eqref{main}: plainly the $\xi_n$, $n\in \mathbb{N}$, are equiprobable random signs; furthermore, for all  $n\in \ZZn$ and for all $k\in \mathbb{N}$ one has $\PP(\xi_n=1,X_{n-1}=\cdots=X_{n-k}=1)=\PP(X_n=X_{n-1}=\cdots=X_{n-k}=1)=2^{-k-1}=\PP(\xi_n=1)\PP(X_{n-1}=\cdots=X_{n-k}=1)$, yielding the independence of $\xi_n$ from $\FF^X_{n-1}$ (while the adaptedness of $\xi$ to $\FF^X$ is clear). 

$(\bullet_3)$  Finally, take again any weak solution of Definition~\ref{definition:classical}\eqref{a}. For each $n\in \ZZn$ and for each $k\in \mathbb{N}$ one has $\PP(X_n=1,\xi_n=\cdots=\xi_{n-k+1}=1)=\PP(\xi_n=\cdots=\xi_{n-k+1}=1,X_{n-k}=1)=\PP(\xi_n=\cdots=\xi_{n-k+1}=1)\PP(X_{n-k}=1)=\PP(X_n=1)\PP(\xi_n=\cdots=\xi_{n-k+1}=1)$. Therefore, for all $n\in \ZZn$, $X_n$ is independent of $\FF^\xi_n$ (hence in fact of the whole of $\xi$); being non-degenerate, it cannot also be $\FF^\xi_n$-measurable. No weak solution to \eqref{main} can ever be strong.

\begin{remark}\label{rmk}
In Definition~\ref{definition:classical} one could ask, ceteris paribus: in \eqref{a} for \eqref{main} to hold only a.s.-$\PP$; 
and/or in \eqref{b} for $\FF^X$ to be included only in the $\PP$-completion of $\FF^\xi$. It would be without consequence for $(\bullet_1)$-$(\bullet_2)$-$(\bullet_3)$. 
\end{remark}

The preceding is well-known --- the multiplicative-increments-evolution process $\xi$ of $X$ in \eqref{main} innovates but  fails to generate $X$ (even though the tail $\sigma$-field $\FF^X_{-\infty}$ of $X$ is trivial!): in no weak solution of Definition~\ref{definition:classical}\eqref{a} can any of the $X_n$, $n\in \ZZn$, be a measurable function of $\xi$. In the phrasing of  \cite{emery} ``the answer to the innovation problem [for \eqref{main}] is negative, some kind of creation of information occurs'', the extra information  ``appears magically, from thin air'' \cite{documenta}. But nevertheless (to the best of the author's knowledge, a novel result), 
\begin{theorem}\label{theorem}
 \eqref{main} admits a weak solution of Definition~\ref{definition:classical}\eqref{a} in which, for each $n\in\ZZn$,  $X_n$ is a function of $\xi\vert_{\mathbb{Z}_{\leq n}}$ [necessarily this function is not measurable w.r.t. $(2^{\{-1,1\}})^{\otimes \mathbb{Z}_{\leq n}}$, of course]; 
\end{theorem}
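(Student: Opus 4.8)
The plan is to realise the weak solution on the canonical space, so that $\xi$ literally \emph{is} the driving i.i.d.\ noise. Let $\Omega:=\{-1,1\}^{\ZZn}$ carry the coordinate maps $\xi_n$, the product $\sigma$-field $\FF^\xi_0$, and the measure $\PP_0:=(\tfrac12\delta_{-1}+\tfrac12\delta_{1})^{\otimes\ZZn}$, under which the $\xi_n$ are independent equiprobable signs. I would build $X$ by hand as an explicit, necessarily nonmeasurable, non-anticipative function of $\xi$, and only afterwards enlarge $\PP_0$ to a probability measure $\PP$ on a $\sigma$-field $\GG$ large enough to turn $X$ into an honest process. To construct $X$: declare $\omega\sim\omega'$ whenever $\{k\in\ZZn:\omega_k\ne\omega'_k\}$ is finite — an equivalence relation with countable classes — and, by the axiom of choice, fix a representative $r(\omega)$ in each class; put
\[
X_0(\omega):=(-1)^{\bigl|\{k\in\ZZn\,:\,\omega_k\ne r(\omega)_k\}\bigr|},\qquad X_n(\omega):=X_0(\omega)\prod_{k=n+1}^{0}\omega_k\quad(n\in\ZZn),
\]
the empty product (at $n=0$) being $1$. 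Since $\{k:\omega_k\ne r(\omega)_k\}\triangle\{k:\omega'_k\ne r(\omega)_k\}=\{k:\omega_k\ne\omega'_k\}$ when $\omega\sim\omega'$ (so that $r(\omega)=r(\omega')$), one gets the cocycle identity $X_0(\omega)/X_0(\omega')=(-1)^{|\{k:\omega_k\ne\omega'_k\}|}$ for $\omega\sim\omega'$. Two consequences: first, $X_n/X_{n-1}=\xi_n$, so \eqref{main} holds; second, if $\omega$ and $\omega'$ agree on $\mathbb{Z}_{\leq n}$ — so they differ only inside the finite block $\{n+1,\dots,0\}$ — then $X_n(\omega)=X_n(\omega')$, i.e.\ $X_n$ is a genuine function of $\xi\vert_{\mathbb{Z}_{\leq n}}$.

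The crux is the measure-theoretic claim that $\{X_0=1\}$ has $\PP_0$-inner measure $0$ and $\PP_0$-outer measure $1$; each $\{X_n=\pm1\}$ then has the same property, being a boolean combination of $\{X_0=1\}$ with members of $\FF^\xi_0$ (via $X_n=X_0\prod_{k=n+1}^0\xi_k$). For finite $S\subseteq\ZZn$ let $T_S\colon\Omega\to\Omega$ flip the coordinates in $S$; each $T_S$ preserves $\PP_0$ and $X_0\circ T_S=(-1)^{|S|}X_0$, so $\{X_0=1\}$ is invariant under the countable group $G_{\mathrm{even}}$ of even flips. A cylinder-approximation argument — approximate a $G_{\mathrm{even}}$-invariant $A\in\FF^\xi_0$ by a cylinder on a finite window $W$, compare it with its image under a flip $T_{\{i,j\}}$ with $i\in W$ and $j\notin W$, and let $W$ exhaust $\ZZn$ — shows that $A$ is, modulo $\PP_0$-null sets, invariant under every single-coordinate flip, hence belongs (mod null) to the tail $\sigma$-field $\bigcap_{m\in\ZZn}\FF^\xi_m$ of $\xi$ and so has $\PP_0$-measure $0$ or $1$ by Kolmogorov's zero-one law; this is precisely where the zero-one law, in a converse guise, does its work. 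Consequently, if $M\subseteq\{X_0=1\}$ were measurable with $\PP_0(M)>0$, then $\bigcup_{S\ \mathrm{even}}T_S M$ would be a $G_{\mathrm{even}}$-invariant measurable subset of $\{X_0=1\}$ of full $\PP_0$-measure, whence its image under $T_{\{0\}}$ would be a full-measure measurable subset of the \emph{disjoint} set $\{X_0=-1\}$ — absurd. Thus $\{X_0=1\}$ has inner measure $0$; applying the same to its complement gives outer measure $1$. I expect this step to be the main obstacle; everything else is bookkeeping.

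It remains to assemble the solution. Because $\{X_0=1\}$ has inner and outer measures $0$ and $1$, the prescription
\[
\PP\bigl((A\cap\{X_0=1\})\cup(B\cap\{X_0=-1\})\bigr):=\tfrac12\PP_0(A)+\tfrac12\PP_0(B),\qquad A,B\in\FF^\xi_0,
\]
is unambiguous on $\GG:=\sigma\bigl(\FF^\xi_0\cup\{\{X_0=1\}\}\bigr)=\sigma\bigl(\FF^\xi_0\cup\{X_m:m\in\ZZn\}\bigr)$ and is readily checked to be a probability measure there extending $\PP_0$. Let $\FF=(\FF_n)_{n\in\ZZn}$ with $\FF_n:=\sigma(\xi_m,X_m:m\in\mathbb{Z}_{\leq n})=\FF^\xi_n\vee\sigma(X_n)$. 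Then $(\Omega,\GG,\PP,\FF)$ carries the $\FF$-adapted pair $(\xi,X)$, \eqref{main} holds, and $\PP\vert_{\FF^\xi_0}=\PP_0$ keeps every $\xi_i$ an equiprobable sign; the only thing left is $\xi_i\perp\FF_{i-1}$ for each $i\in\ZZn$. Writing a generic $H\in\FF_{i-1}$ as $(A\cap\{X_{i-1}=1\})\cup(B\cap\{X_{i-1}=-1\})$ with $A,B\in\FF^\xi_{i-1}$, decomposing $\{X_{i-1}=\pm1\}$ through $X_{i-1}=X_0\prod_{k=i}^{0}\xi_k$, and then invoking the displayed formula for $\PP$ together with the $\PP_0$-independence of $\xi_i,\dots,\xi_0$ from $\FF^\xi_{i-1}$, a short computation yields $\PP(\{\xi_i=\varepsilon\}\cap H)=\tfrac12\PP(H)$ for $\varepsilon\in\{-1,1\}$. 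This is the sought weak solution of Definition~\ref{definition:classical}\eqref{a} in which $X_n$ is a function of $\xi\vert_{\mathbb{Z}_{\leq n}}$ for every $n$; nonmeasurability of those functions is forced by the facts recalled just before the theorem.
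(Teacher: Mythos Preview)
Your proof is correct and follows the same overall architecture as the paper --- Warren's choice-function construction on tail-equivalence classes, then showing $\{X_0=1\}$ is $\PP_0$-saturated, then extending the measure --- but the argument for saturatedness is genuinely different. The paper establishes a Steinhaus-type lemma for the group $(\Omega,\star)$: if $A$ has positive $\overline{\PP_0}$-measure then $A\star A$ contains a cylinder neighbourhood of $\mathbbm{1}$; combining this with the algebraic fact that $\omega_1\star\omega_2\in\{X_1=1\}$ whenever $\omega_1\sim\omega_2$ are both in $\{X_1=1\}$, one derives a contradiction from $A\subset\{X_1=1\}$. You instead prove that the action of the group $G_{\mathrm{even}}$ of even finite flips is ergodic: a cylinder-approximation shows any $G_{\mathrm{even}}$-invariant measurable set is mod-null invariant under every single flip $T_{\{i\}}$, hence lies in the (completed) tail $\sigma$-field and is trivial by Kolmogorov; since $\{X_0=1\}$ is $G_{\mathrm{even}}$-invariant and $T_{\{0\}}$ swaps it with its complement, saturatedness follows.

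Your route is a touch more elementary --- it avoids the topological input (inner/outer regularity via the mod-$0$ isomorphism with Lebesgue measure) that the Steinhaus argument uses --- and it makes the thematic link to Kolmogorov's zero-one law explicit at exactly the point where nonmeasurability is produced, which is pleasant given the paper's title. The paper's Steinhaus approach, on the other hand, stays closer to the classical Vitali construction and would transfer verbatim to other compact groups with Haar measure. You also write out the extension $\PP$ and the verification of $\xi_i\perp\FF_{i-1}$ explicitly, whereas the paper quotes a general extension lemma and then appeals to $(\bullet_2)$; both are fine. One small comment: in your ergodicity step you should note that once $A=T_{\{i\}}A$ mod null for each $i$, composition gives $A=T_SA$ mod null for \emph{all} finite $S$, so that $\bigcap_{S\subset F}T_SA$ (over all $S$, not just even ones) furnishes a version of $A$ in $\sigma(\xi_k:k\notin F)$ --- you clearly have this in mind, but the sentence as written jumps straight from single flips to the tail.
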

\noindent in \eqref{main} the evolution process \emph{can} explain everything (albeit non-measurably)!
It is shown to be true in Section~\ref{sec:construction}. Remark however already here that 

$(\bullet_4)$ \eqref{main} admits also a weak solution of Definition~\ref{definition:classical}\eqref{a} in which the property of Theorem~\ref{theorem} fails on every $\PP$-almost certain set (so the statement of the theorem is not trivial). Take indeed the solution of $(\bullet_2)$ with $\Omega=\{-1,1\}^{\ZZn}$, $X$ the coordinate projections, $\GG=\FF_0$ (hence $\PP=(\frac{1}{2}\delta_{-1}+\frac{1}{2}\delta_1)^{\times \ZZn}$). Let $\Omega^*$ be $\PP$-almost certain. Put $\Omega^{**}:=\Omega^*\backslash \theta(\Omega\backslash \Omega^{*})$, where $\theta:=-\id_\Omega$ is the involutive (its square is the identity) measure-preserving transformation of $\Omega$ that flips all the signs. Then $\Omega^{**}\in 2^{\Omega^*}$ is $\PP$-almost certain and  $\theta(\Omega^{**})=\theta(\Omega^*)\backslash (\Omega\backslash \Omega^{*})\subset \Omega^*$. Take any $\omega\in \Omega^{**}$ (it exists); then $\{\omega,\theta(\omega)\}\subset \Omega^{*}$. One has $\xi(\omega)=\xi(\theta(\omega))$, while $X_k(\omega)\ne -X_k(\omega)= X_k(\theta(\omega))$ for all $k\in \ZZn$. So, in fact, on  no $\PP$-almost certain $\Omega^*$ can any of the $X_k$, $k\in \ZZn$, be a function of $\xi$. 

The result of Theorem~\ref{theorem} brings to the forefront the significant r\^ole that measurability actually plays in non-discrete\footnote{Why non-discrete? True, the random signs (of a weak solution $(\xi,X)$ of Definition~\ref{definition:classical}\eqref{a}) \emph{individually} are discrete. However, their totality is not.} probability, and that is perhaps sometimes not so clearly visible -- though of course not unappreciated in the literature, see e.g. \cite{vovk} for a relatively recent study in the context of game-theoretic probability. An analogue of Theorem~\ref{theorem} for the case of random elements with diffuse laws is provided in Remark~\ref{continuous-space-analogue}.

\begin{question}
Could one also observe the same basic phenomenon in continuous time (and space)? In particular for Tsirelson's stochastic differential equation? The answer is probably yes, but such construction appears nevertheless to be  more involved.
\end{question}

In passing to Theorem~\ref{theorem} one finds informative (albeit a very special case of) the following converse to Kolmogorov's zero-one law. To better appreciate it, the reader will recall the content of the latter: if $(\xi_i)_{i\in I}$ is any independency of sub-$\sigma$-fields under a probability $\PP$, then $\limsup \xi:=\bigcap_{\text{finite }\!F\in 2^I}\lor_{i\in I\backslash F}\xi_i\subset \PP^{-1}(\{0,1\})$; in particular the tail $\sigma$-field of a sequence of independent random elements  is trivial. What the result to follow shows is that, in the discrete setting, a kind of (the best one can hope for) converse also holds: except when this obviously fails, an event of a sequence with independent values is negligible (resp. almost certain) only if it is contained in a negligible (resp. contains an almost certain) tail event of said sequence. 

\begin{theorem}\label{thm:kolmo}
Let $(\Omega,\GG,\PP)$ be a probability space and let $\xi=(\xi_n)_{n\in \mathbb{N}}$ be a sequence of independent random elements thereon with $\xi_n$ valued in a countable set $E_n$ for $n\in \mathbb{N}$. 
Consider the following statements.
\begin{enumerate}[(i)]
\item\label{Ki}  For all $n\in \mathbb{N}$ and $e\in E_n$, $\PP(\xi_n=e)>0$.
\item\label{Kii} For every $\PP$-a.s. $\Omega^*\in \sigma(\xi)$  there exists a $\PP$-a.s. $\Omega^{**}\in \limsup_{n\to\infty}\sigma(\xi_n)$ with $\Omega^{**}\subset \Omega^*$.
\item\label{Kiii} For every $\PP$-negligible $\Omega^*\in \sigma(\xi)$  there exists a $\PP$-negligible $\Omega^{**}\in \limsup_{n\to\infty}\sigma(\xi_n)$ with $\Omega^{**}\supset \Omega^*$.
\end{enumerate}
Then \ref{Kii} and \ref{Kiii} are equivalent, and they are implied by \ref{Ki}. If furthermore $\xi$ is sufficiently nice in the sense that
\begin{quote} for all $n\in \mathbb{N}$, for all $\{e,f\}\subset E_n$ with $e\ne f$ and $\PP(\xi_n=e)=0$, and for all $\omega\in \Omega$ with $\xi_n(\omega)=f$, there exist an $\omega'\in \Omega$ and a $k\in \mathbb{N}$ such that $\xi_n(\omega')=e$ while $\xi_l(\omega)=\xi_l(\omega')$ for all $l\in \mathbb{N}_{\geq k}$,
\end{quote}
then the statements \ref{Ki}-\ref{Kii}-\ref{Kiii} are in fact all equivalent. 
\end{theorem}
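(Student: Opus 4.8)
First I would record that $\mathcal{T}_\xi:=\limsup_{n\to\infty}\sigma(\xi_n)=\bigcap_{m\in\mathbb{N}}\bigvee_{n>m}\sigma(\xi_n)$ is the usual tail $\sigma$-field of $\xi$ — every cofinite subset of $\mathbb{N}$ contains a set $\{n:n>m\}$ — in particular a $\sigma$-field. The equivalence \ref{Kii}$\Leftrightarrow$\ref{Kiii} then follows at once by complementation: $\Omega^*\mapsto\Omega\setminus\Omega^*$ swaps the $\PP$-a.s.\ and the $\PP$-negligible members of $\sigma(\xi)$, reverses inclusions, and leaves $\mathcal{T}_\xi$ invariant.

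For \ref{Ki}$\Rightarrow$\ref{Kii} I would pass to the canonical model: let $\mathbf{E}:=\prod_{n}E_n$ carry its product $\sigma$-field $\mathcal{E}$, set $\mu_n:=\Law(\xi_n)$, $\mu:=\bigotimes_{n}\mu_n=\Law(\xi)$ and $\mu_{>m}:=\bigotimes_{n>m}\mu_n$; routinely $\sigma(\xi)=\xi^{-1}(\mathcal{E})$ and $\xi^{-1}(\mathcal{T}_{\mathbf{E}})\subseteq\mathcal{T}_\xi$, with $\mathcal{T}_{\mathbf{E}}$ the tail $\sigma$-field of the coordinate projections. Write $\Omega^*=\xi^{-1}(A)$, $A\in\mathcal{E}$, $\mu(A)=1$. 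The one decisive point is a Fubini computation on the product $\mu$: under \ref{Ki} every finite cylinder $\{x_1=a_1,\dots,x_m=a_m\}$ has strictly positive $\mu$-mass, there are only countably many of them, and
\[
1=\mu(A)=\sum_{(a_1,\dots,a_m)\in E_1\times\cdots\times E_m}\mu\bigl(\{x_1=a_1,\dots,x_m=a_m\}\bigr)\,\mu_{>m}\bigl(A_{a_1,\dots,a_m}\bigr),
\]
where $A_{a_1,\dots,a_m}:=\{y:(a_1,\dots,a_m,y)\in A\}$, the weights are positive and sum to $1$, and each $\mu_{>m}(A_{a_1,\dots,a_m})\le1$; hence \emph{every} section $A_{a_1,\dots,a_m}$ is $\mu_{>m}$-conegligible. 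Now put
\[
A':=\bigcap_{m\ge0}\ \bigcap_{(b_1,\dots,b_m)\in E_1\times\cdots\times E_m}\bigl\{x\in\mathbf{E}:(b_1,\dots,b_m,x_{m+1},x_{m+2},\dots)\in A\bigr\},
\]
the largest subset of $A$ saturated for the relation ``$x\sim y$ iff $x_n=y_n$ for all but finitely many $n$'' (its classes are countable, as the $E_n$ are). This intersection is countable (finite products of countable sets are countable) and its members lie in $\mathcal{E}$, so $A'\in\mathcal{E}$; $A'$ is $\sim$-saturated, hence $A'\in\mathcal{T}_{\mathbf{E}}$; plainly $A'\subseteq A$ (the $m=0$ term); and $\mathbf{E}\setminus A'$ is a countable union of $\mu$-null sets by the previous step, so $\mu(A')=1$. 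Then $\Omega^{**}:=\xi^{-1}(A')$ meets the requirements of \ref{Kii}.

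It remains to prove \ref{Kii}$\Rightarrow$\ref{Ki} under the niceness hypothesis, which I would do by contraposition. Suppose $\PP(\xi_{n_0}=e_0)=0$ for some $n_0\in\mathbb{N}$, $e_0\in E_{n_0}$, and set $\Omega^*:=\{\xi_{n_0}\ne e_0\}\in\sigma(\xi_{n_0})\subseteq\sigma(\xi)$, which is $\PP$-a.s. Let $\Omega^{**}\in\mathcal{T}_\xi$ with $\Omega^{**}\subseteq\Omega^*$. For every $k$ one has $\Omega^{**}\in\bigvee_{l\ge k}\sigma(\xi_l)$, i.e. $\Omega^{**}$ is of the form $\{(\xi_l)_{l\ge k}\in B\}$, so that if $\xi_l(\omega)=\xi_l(\omega')$ for all $l\ge k$ then $\omega\in\Omega^{**}\Leftrightarrow\omega'\in\Omega^{**}$. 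Now take any $\omega\in\Omega$. If $\xi_{n_0}(\omega)=e_0$, then $\omega\notin\Omega^*\supseteq\Omega^{**}$. If $\xi_{n_0}(\omega)=f\ne e_0$, the niceness hypothesis applied with $n=n_0$ and these $e=e_0$, $f$ furnishes $\omega'\in\Omega$ and $k\in\mathbb{N}$ with $\xi_{n_0}(\omega')=e_0$ and $\xi_l(\omega)=\xi_l(\omega')$ for all $l\ge k$; the preceding display together with the first case forces $\omega\notin\Omega^{**}$. Thus $\Omega^{**}=\emptyset$, which is not $\PP$-a.s.; hence \ref{Kii} fails for this $\Omega^*$, and by \ref{Kii}$\Leftrightarrow$\ref{Kiii} so does \ref{Kiii}.

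I expect the crux to be the identity $\mu(A')=1$ in the argument for \ref{Ki}$\Rightarrow$\ref{Kii}: one has to hit upon the $\sim$-saturation of $A$ as the correct tail event, and to exploit that \ref{Ki} upgrades the customary ``$\mu$-almost every finite-prefix section of $A$ is conegligible'' to ``\emph{every} finite-prefix section of $A$ is conegligible'', which is exactly what makes the countable union bounding $\mathbf{E}\setminus A'$ negligible. The niceness direction is then comparatively soft, resting on the single observation that a tail event cannot respond to any one coordinate on its own and is therefore unaffected by the flip to the null value $e_0$ that the hypothesis supplies.
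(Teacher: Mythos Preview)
Your proof is correct. The equivalence \ref{Kii}$\Leftrightarrow$\ref{Kiii} and the ``niceness'' direction match the paper's arguments essentially verbatim (the paper phrases the latter as \ref{Kiii}$\Rightarrow$\ref{Ki} by contradiction, you as $\lnot$\ref{Ki}$\Rightarrow\lnot$\ref{Kii}, which is the same thing dualised).

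For \ref{Ki}$\Rightarrow$\ref{Kii} you take a genuinely different route from the paper. The paper argues on $\Omega$ itself: under \ref{Ki}, each transposition $\pi$ of $E_n$ induces an involution $\theta^n_\pi$ on $\prod_{m\ge n}E_m$ that preserves the law of $(\xi_k)_{k\ge n}$ up to equivalence; a separate lemma then shows that any almost-sure set can be shrunk to an almost-sure set invariant under a countable family of such involutions, and iterating over $n$ pushes $\Omega^*$ down into the tail. You instead pass to the canonical product $(\mathbf{E},\mu)$ and use Fubini directly: the key observation that under \ref{Ki} \emph{every} finite-prefix section of a $\mu$-full set $A$ is $\mu_{>m}$-full lets you build the $\sim$-saturation $A'=\{x:[x]_\sim\subset A\}$ as a countable intersection of full-measure sets, and $\sim$-saturated sets are precisely tail events. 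Your argument is shorter and more elementary---it bypasses the abstract involution lemma entirely and makes the role of \ref{Ki} completely transparent (it is exactly what upgrades ``almost every section'' to ``every section''). The paper's approach, on the other hand, stays on the original space and has a more group-theoretic flavour (quasi-invariance under finite permutations of coordinate values), which may generalise more readily to settings where one cannot so cleanly pass to a product model.
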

Theorem~\ref{thm:kolmo}, the second main finding of this paper, is proved in Section~\ref{sec:kolmo}. Surprisingly, the result does not appear to have been noted in the literature thus far, though we may mention a counterexample on a would-be converse to Kolmogorov's zero-one law in another direction: triviality of the tail $\sigma$-field does not require independence \cite[1.24]{romano1986counterexamples}. Some immediate remarks concerning Theorem~\ref{thm:kolmo} are as follows.
\begin{enumerate}[(a)]
\item $\xi$ is certainly ``sufficiently nice'' if it is the canonical process on $\prod_{m\in\mathbb{N}}E_m$; as usual the main added value  --- viz. ``sitting'' oneself on  a canonical space --- of this, arguably very technical, condition displayed in Theorem~\ref{thm:kolmo} appears to be in it being able to handle spaces that are products of the canonical space and some other space. 
\item Perhaps one could weaken the ``$\xi$ is sufficiently nice'' condition, but one cannot dispense with it completely, simply because, waiving it, then any of the $E_n$, $n\in \mathbb{N}$, can be enlarged by some $e'\notin E_n$, without affecting the validity of \ref{Kii} or \ref{Kiii}, while  $\PP(\xi_n=e')=0$ for such $e'$. Of course in the preceding the equivalence of \ref{Ki} and \ref{Kii} fails somehow for trivial reasons; see however Example~\ref{example:suff-nice} for a more satisfying counterexample. 
\item The countability of the ranges of the $\xi_n$, $n\in \mathbb{N}$, is, apparently, more or less essential for anything of interest to be recorded in this vein (see  Remark~\ref{remark:ceteris-paribus}).
\item Instead of with the sequence of discrete random elements $\xi$ one could work, in a clear way, with a sequence of countable measurable partitions. However, it seems easier to think about the matter in terms of sequences of random elements.
\item The independence assumption of Theorem~\ref{thm:kolmo} is essential, see Example~\ref{ex:indep}. 
\item By discarding a $\PP$-negligible event and making the $E_n$, $n\in \mathbb{N}$, smaller if necessary, condition \ref{Ki} can always be forced if it does not hold to begin with. 
\end{enumerate} 
For a ``positive testament'' to Theorem~\ref{thm:kolmo} see Example~\ref{example:last}. 

Finally, as it is perhaps slightly nonstandard, before proceeding to the proofs, let us make it explicit that
\begin{notation}
we will write: $\AA/\BB$ for the set of $\AA/\BB$-measurable maps; $\overline{\AA}^\QQ:=\AA\lor \QQ^{-1}(\{0\})$ for the $\QQ$-completion of $\AA$; $\overline{\QQ}$ for the completion of $\QQ$.
\end{notation}

\section{Theorem~\ref{theorem}: construction of a non-anticipative solution to \eqref{main}}\label{sec:construction}
It will be more convenient in this section to work with $\mathbb{N}$ in lieu of $\ZZn$ as the (temporal) index set.

Let $\Omega:=\{-1,1\}^{\mathbb{N}}$, $\xi=(\xi_n)_{n\in \mathbb{N}}$ the coordinate process on $\Omega$, $\sim$ the equivalence relation of equality of tails:
$$\omega_1\sim\omega_2\Leftrightarrow \left(\omega_1=\omega_2\text{ on }\mathbb{N}_{\geq n}\text{ for some $n\in \mathbb{N}$}\right),\quad \{\omega_1,\omega_2\}\subset \Omega.$$ Let also $\Omega^*$ be the range of a choice function on $\Omega/_\sim$; assume for convenience (as one may) that $\mathbbm{1}_\mathbb{N}\in \Omega^*$. 

For $\omega^*\in \Omega^*$ put $X_1(\omega^*):=1$ and then inductively $X_{n+1}(\omega^*):=X_n(\omega^*)\xi_n(\omega^*)$ for $n\in \mathbb{N}$ [in particular $X_n(\mathbbm{1}_\mathbb{N})=1$ for all $n\in \mathbb{N}$];  for $\omega\in \Omega\backslash \Omega^*$ let $\omega^*$ be the unique element of $\Omega^*$ equivalent to $\omega$, let $n\in \mathbb{N}$ be such that $\omega=\omega^*$ on $\mathbb{N}_{\geq n}$ [there is ambiguity in $n$, but it does not matter], put $X_n(\omega):=X_n(\omega^*)$ and define $X_k(\omega)$ for $k\in \mathbb{N}\backslash \{n\}$ so that the recursion 
$$X_{l+1}(\omega)=X_{l}(\omega)\xi_l(\omega),\quad l\in \mathbb{N},$$ is satisfied (it holds for $\omega\in \Omega^*$ also). For  each $n\in \mathbb{N}$, $X_n$ is a function of $(\xi_k)_{k\in \mathbb{N}_{\geq n}}$: if $\xi_k(\omega)=\xi_k(\omega')$ for all $k\in \mathbb{N}_{\geq n}$, then $\omega\sim\omega'$ and (so) $X_n(\omega)=X_n(\omega')$, no matter what the $\omega$ and $\omega'$ from $\Omega$ may be. The preceding construction is due to Jon Warren \cite{warren}.

Let now $\PP:=(\frac{1}{2}\delta_{-1}+\frac{1}{2}\delta_1)^{\times \mathbb{N}}$ be the ``fair-coin-tossing'' measure on $\mathcal{B}_\Omega:=(2^{\{-1,1\}})^{\otimes \mathbb{N}}$. Note that $\mathcal{B}_\Omega$ is also the Borel $\sigma$-field on $\Omega$ for the product topology (where each coordinate has the discrete topology) and that the map $\Phi:=(\Omega\ni \omega\mapsto \sum_{n\in \mathbb{N}}\frac{\omega(n)+1}{2^{n+1}}\in [0,1] )$ is  continuous as well as a mod-$0$ isomorphism between $\overline{\PP}$, the completion of $\PP$, and the Lebesgue measure on $[0,1]$. 
Under $\PP$ the random variables $\xi_n$, $n\in \mathbb{N}$, are independent equiprobable random signs. 


Now, none of the $X_n$, $n\in \mathbb{N}$, is a random variable under $\PP$ (meaning that none of them is $\mathcal{B}_\Omega$-measurable). For if it was, then each of the $X_n$, $n\in \mathbb{N}$, would be so, and then, again for each $n\in \mathbb{N}$, because $X_n$ is a function of $(\xi_k)_{k\in \mathbb{N}_{\geq n}}$, it would even be a $(2^{\{-1,1\}})^{\otimes  \mathbb{N}_{\geq n}}$-measurable function of the $(\xi_k)_{k\in \mathbb{N}_{\geq n}}$ [this is because of the structure of the space; quite simply  
$X_n=X_n(\psi_n)$, where $\psi_n(\omega):=(\underbrace{1,\ldots,1}_{(n\shortminus 1)\text{times}},\xi\vert_{\mathbb{N}_{\geq n}}(\omega))$ for $\omega\in \Omega$], which in turn, upon a trivial transposition from $\mathbb{N}$ to $\ZZn$, would yield a strong solution to \eqref{main}, a contradiction (recall $(\bullet_3)$ from the Introduction). 

In fact, for each $n\in \mathbb{N}$, $X_n$ is not even a random variable under $\overline{\PP}$ (i.e. not $\overline{\mathcal{B}_\Omega}^{\overline{\PP}}$-measurable): a simple completion cannot (begin to) save us. It is not unexpected, though it is a little less obvious. To see it we proceed yet again by contradiction. If one (equivalently each) of the $X_n$, $n\in \mathbb{N}$, would be a random variable under $\overline{\PP}$, then, for all $n\in \mathbb{N}$, $X_n=X_n'$ a.s.-$\overline{\PP}$ for some $X_n'\in \mathcal{B}_\Omega/2^{\{-1,1\}}$. Thus, by Theorem~\ref{thm:kolmo} (its proof will of course be independent of this argument), on a $\PP$-almost certain tail event  $A$ of $\xi$, we would have $X_n=X_n'$ and hence $X_n=X_n'(\psi_n)$ for all $n\in \mathbb{N}$ [the tail event $A$ intervenes somewhat crucially here: for $\omega\in A$ also $\psi_n(\omega)\in A$ (because $A\in \sigma(\xi\vert_{\mathbb{N}_{\geq n}})$), hence  $X_n'(\psi_n(\omega))=X_n(\psi_n(\omega))=X_n(\omega)$ (because $X_n$ is a function of $\xi\vert_{\mathbb{N}_{\geq n}}$)], rendering $X_n\in \overline{\sigma(\xi\vert_{\mathbb{N}_{\geq n}})}^{\overline{\PP}}/2^{\{-1,1\}}$.\footnote{Of course since $X_n=X_n\circ \psi_n$, to show that $X_n=X'_n\circ \psi_n$ a.s.-$\overline{\PP}$ (and hence that $X_n\in \overline{\sigma(\xi\vert_{\mathbb{N}_{\geq n}})}^{\overline{\PP}}/2^{\{-1,1\}}$), really one needs only that $\overline{\PP}(\psi_n\in \{X_n=X_n'\})=1$; however, $\psi_n$ is not measure-preserving and therefore it is presumably not (entirely) obvious, i.e. an intervention of (something akin to) Theorem~\ref{thm:kolmo} seems necessary.  In view of $X_n=X_n\circ \psi_n$, an alternative path to establishing that $X_n\in \overline{\sigma(\xi\vert_{\mathbb{N}_{\geq n}})}^{\overline{\PP}}/2^{\{-1,1\}}$, would be to argue that $\psi_n\in \overline{\sigma(\xi\vert_{\mathbb{N}_{\geq n}})}^{\overline{\PP}}/\overline{\mathcal{B}_\Omega}^{\overline{\PP}}$; since  $\psi_n\in \sigma(\xi\vert_{\mathbb{N}_{\geq n}})/\mathcal{B}_\Omega$ it amounts to checking that $\psi_n^{-1}(A)$ is $\PP$-negligible when $A$ is, which is basically the same kind of thing as was needed before. On the other hand, it is also clear that the full force of Theorem~\ref{thm:kolmo} is not needed here, and in lieu of it one could certainly provide a --- shorter when compared to the proof of Theorem~\ref{thm:kolmo} --- argument tailored to this specific context.} But then we would again obtain a strong solution to \eqref{main} (recall Remark~\ref{rmk}), a contradiction. (There are many other interesting constructions of non-measurable sets from a sequence of /independent/ coin tosses, e.g. \cite{blackwell,soo}.)

In spite of the preceding, as we shall see, we will be able to extend $\PP$ to a probability $\PP'$ in such a manner that, under $\PP'$, $X_1$ is an equiprobable random sign independent of $\xi$. Then, plainly, under $\PP'$, the $X_n$, $n\in \mathbb{N}$, will become independent equiprobable signs. Transposing from $\mathbb{N}$ to $\ZZn$ it will yield Theorem~\ref{theorem} (recall $(\bullet_2)$ from the Introduction). 


\begin{lemma}
Let $(X,\HH,\Theta)$ be a probability space, $N\in \mathbb{N}$ and $(S_n)_{n=1}^N$ a partition of $X$ into $\Theta$-saturated subsets (saturated: inner measure zero, outer measure one; in particular, not-$\overline{\Theta}$-measurable). Then $\Theta$ admits an extension to a probability $\Theta'$ on $\HH\lor \sigma_X(\{S_1,\ldots,S_N\})$ rendering each $S_i$ independent of $\HH$ and having $\Theta'(S_i)=1/N$, $i\in [N]$. 
\end{lemma}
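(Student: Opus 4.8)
The plan is to construct $\Theta'$ explicitly on $\HH':=\HH\lor\sigma_X(\{S_1,\dots,S_N\})$, rather than to iterate the classical one-saturated-set extension (which does not apply directly here: after adjoining $S_1$ the set $S_2$ is no longer of outer measure $1$). First I would record the shape of $\HH'$: since $\{S_1,\dots,S_N\}$ is a finite partition of $X$, the $\sigma$-field $\sigma_X(\{S_1,\dots,S_N\})$ has atoms $S_1,\dots,S_N$, and one checks readily that
\[
\HH'=\Bigl\{\textstyle\bigcup_{i=1}^N(H_i\cap S_i):H_1,\dots,H_N\in\HH\Bigr\}
\]
(the right-hand side is stable under complementation and under countable unions — the latter taken coordinatewise in $i$ — and contains $\HH$ as well as every $S_i$, while every one of its members is built from the $H_i$ and $S_i$ by finitely many Boolean operations). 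On a set of this form I would \emph{define}
\[
\Theta'\Bigl(\textstyle\bigcup_{i=1}^N(H_i\cap S_i)\Bigr):=\frac1N\sum_{i=1}^N\Theta(H_i).
\]

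The one genuine point is well-definedness, and it is exactly here that the saturation hypothesis is used — this is the main (and essentially the only) obstacle. Suppose $\bigcup_{i=1}^N(H_i\cap S_i)=\bigcup_{i=1}^N(H_i'\cap S_i)$ with all $H_i,H_i'\in\HH$. Intersecting with $S_j$ and invoking the pairwise disjointness of the $S_i$ gives $H_j\cap S_j=H_j'\cap S_j$, i.e. $H_j\triangle H_j'\subset X\setminus S_j$, for each $j\in[N]$. But $H_j\triangle H_j'\in\HH$, while $X\setminus S_j$ has $\Theta$-inner measure $0$ because $S_j$ has $\Theta$-outer measure $1$; hence $\Theta(H_j\triangle H_j')=0$ and $\Theta(H_j)=\Theta(H_j')$, so $\sum_j\Theta(H_j)=\sum_j\Theta(H_j')$, as required.

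It then remains to verify the bookkeeping. Countable additivity follows by the same mechanism: if the sets $A^k:=\bigcup_i(H_i^k\cap S_i)$, $k\in\mathbb{N}$, are pairwise disjoint, then $H_i^k\cap H_i^m\cap S_i=\emptyset$ for $k\ne m$, whence $\Theta(H_i^k\cap H_i^m)=0$, so for each fixed $i$ the $(H_i^k)_k$ are pairwise $\Theta$-essentially disjoint and $\Theta(\bigcup_k H_i^k)=\sum_k\Theta(H_i^k)$; since $\bigcup_k A^k=\bigcup_i\bigl((\bigcup_k H_i^k)\cap S_i\bigr)$, well-definedness yields $\Theta'(\bigcup_k A^k)=\frac1N\sum_i\Theta(\bigcup_k H_i^k)=\sum_k\frac1N\sum_i\Theta(H_i^k)=\sum_k\Theta'(A^k)$. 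Also $\Theta'(X)=\Theta'(\bigcup_i(X\cap S_i))=\frac1N\sum_i\Theta(X)=1$, and $\Theta'$ extends $\Theta$ since $H=\bigcup_i(H\cap S_i)$ for $H\in\HH$. Finally, taking $H_i=X$ and $H_j=\emptyset$ for $j\ne i$ gives $\Theta'(S_i)=1/N$, and taking $H_i=H$ and $H_j=\emptyset$ for $j\ne i$ gives $\Theta'(S_i\cap H)=\frac1N\Theta(H)=\Theta'(S_i)\,\Theta'(H)$ for every $H\in\HH$, i.e. each $S_i$ is $\Theta'$-independent of $\HH$, which completes the proof.
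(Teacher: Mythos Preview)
Your proof is correct and complete. The paper itself does not give a proof at all: it simply refers to \cite[p.~139, proof of Example~7.7]{wise1993counterexamples}, noting that the argument there, stated for Euclidean space, does not actually use any special feature of that setting. What you have written is precisely the standard construction (and almost certainly the one in that reference): describe $\HH'$ as $\{\bigcup_i(H_i\cap S_i):H_i\in\HH\}$, define $\Theta'$ by averaging the $\Theta(H_i)$, and use saturation exactly once, to secure well-definedness via $\Theta(H_j\triangle H_j')\le\Theta_*(X\setminus S_j)=0$. The remaining verifications (countable additivity through essential disjointness, extension, $\Theta'(S_i)=1/N$, independence) are routine and you carry them out cleanly. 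Your opening aside --- that a na\"ive iteration of the one-set extension fails because adjoining $S_1$ drops the outer measure of $S_2$ below $1$ --- is also correct, though not needed for the argument.
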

\begin{proof}
See \cite[p.~139, proof of Example~7.7]{wise1993counterexamples}: it is stated there on Euclidean space for a probability on the Borel sets equivalent to Lebesgue measure, but actually the equivalence condition is only used with reference to \cite[Example~6.9]{wise1993counterexamples} for the existence of the partition, while the rest of the argument is seen easily not to depend on any special property that  Euclidean space with its Borel $\sigma$-field might have viz.  any other measurable space. 
 \end{proof}

Because of the preceding lemma (with $N=2$), to see the existence of the advertised  $\PP'$ it will be enough to show that the event $\{X_1=1\}$ is a saturated  set of $\overline{\PP}$, i.e. that it is of inner measure $0$ and outer measure $1$. To this end note first that the map that ``flips'' the first coordinate is a measure-preserving bimeasurable bijection of $\Omega$ to itself that sends  $\{X_1=1\}$ to $\{X_1=-1\}=\Omega\backslash \{X_1=1\}$. In consequence it is enough to check that $\{X_1=1\}$ has inner measure $0$. Suppose per absurdum that an $A\subset \{X_1=1\}$ has strictly positive $\PP$-measure. 

Let $\star$ be the operation of coordinate-wise multiplication on $\Omega$.  For $\{A,B\}\subset 2^\Omega$ put $A\star B:=\{a\star b:(a,b)\in A\times B\}$, also $k\star A=\{k\star a:a\in A\}$ for $k\in \Omega$ and $A\subset \Omega$ --- such usage of $\star$ is clearly commutative and associative in the clear meaning of these qualifications.

We will establish in a lemma below that  $\{X_1=1\}\star \{X_1=1\}$ contains $\{\xi_1=1,\ldots,\xi_n=1\}$ for some $n\in \mathbb{N}$ (it is a version of the Steinhaus property for the Lebesgue measure). But this cannot be. Notice in fact that if $\{\omega_1,\omega_2\}\subset \{X_1=1\}$ with $\omega_1\sim \omega_2$, then $\omega_1\star\omega_2\in \{X_1=1\}$ [for: because $\omega_1\sim\omega_2$, there is an $n\in \mathbb{N}$ such that $\omega_1$ and $\omega_2$ agree on $\mathbb{N}_{\geq n}$, in particular $X_n(\omega_1)=X_n(\omega_2)$ and, since $\omega_1\star\omega_2$ agrees with $\mathbbm{1}_\mathbb{N}$ on $\mathbb{N}_{\geq n}$, also $X_n(\omega_1\star\omega_2)=X_n(\mathbbm{1}_\mathbb{N})=1$; then $1=X_1(\omega_1)=\xi_1(\omega_1)\cdots\xi_{n-1}(\omega_1)X_n(\omega_1)$ and $1=X_1(\omega_2)=\xi_1(\omega_2)\cdots\xi_{n-1}(\omega_2)X_n(\omega_2)$; therefore $1=\xi_1(\omega_1)\cdots\xi_{n-1}(\omega_1)X_n(\omega_1)\cdot \xi_1(\omega_2)\cdots\xi_{n-1}(\omega_2)X_n(\omega_2)=\xi_1(\omega_1)\xi_1(\omega_2)\cdots\xi_{n-1}(\omega_1)\xi_{n-1}(\omega_2)=\xi_1(\omega_1\star\omega_2)\cdots \xi_{n-1}(\omega_1\star\omega_2)=\xi_1(\omega_1\star\omega_2)\cdots \xi_{n-1}(\omega_1\star\omega_2)X_n(\omega_1\star\omega_2)=X_1(\omega_1\star\omega_2)$]. Further, the $\omega\in \Omega$ that has $\omega_k=(-1)^{\delta_{k,n+1}}$ for all $k\in \mathbb{N}$  belongs to $\{\xi_1=1,\ldots,\xi_n=1\}\cap\{X_1=-1\}$. We must have $\omega=\omega_1\star\omega_2$ for some $\{\omega_1,\omega_2\}\subset \{X_1=1\}$. However, since $\omega\sim \mathbbm{1}_\mathbb{N}$, it means that $\omega_1\sim \omega_2$ and hence $\omega=\omega_1\star\omega_2\in \{X_1=1\}$, a contradiction.

It remains to establish the following version of the Steinhaus theorem.

\begin{lemma}
Let $A$ have positive $\overline{\PP}$-measure. Then $A\star A$ contains a neighborhood of $\mathbbm{1}_\mathbb{N}$.
\end{lemma}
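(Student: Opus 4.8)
The plan is to mimic the classical Steinhaus argument, transported through the mod-$0$ isomorphism $\Phi$ to the Lebesgue setting on $[0,1]$, but being careful that the group operation on $\Omega$ is coordinate-wise multiplication $\star$ (equivalently, coordinate-wise XOR on $\{-1,1\}^{\mathbb{N}}$), which is \emph{not} intertwined with addition on $[0,1]$ by $\Phi$. So rather than literally pushing forward to $[0,1]$, I would argue directly on $\Omega$, using that $(\Omega,\star)$ is a compact topological group (a countable product of the two-element group) on which $\overline{\PP}$ is the (completed) normalised Haar measure, and then invoke the abstract Steinhaus--Weil theorem: in a locally compact topological group with Haar measure, if $A$ has positive measure then $A\cdot A^{-1}$ contains a neighbourhood of the identity. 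Here $A^{-1}=A$ since every element of $\Omega$ is its own $\star$-inverse, so $A\star A^{-1}=A\star A$, and the identity is $\mathbbm{1}_\mathbb{N}$; the conclusion is exactly what is claimed.

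If I instead wanted a self-contained proof rather than citing Steinhaus--Weil, I would proceed as follows. First, by inner regularity of $\overline{\PP}$ with respect to compact sets (the space is compact metrizable, so this is automatic) and outer regularity with respect to open sets, replace $A$ by a compact subset $K\subset A$ with $\PP(K)>0$, and choose an open $U\supset K$ with $\PP(U)<2\PP(K)$. Next I would use continuity of the group operation together with compactness of $K$ to find a neighbourhood $V$ of $\mathbbm{1}_\mathbb{N}$ such that $V\star K\subset U$; concretely, since a basis of neighbourhoods of $\mathbbm{1}_\mathbb{N}$ is given by the cylinders $C_n:=\{\xi_1=1,\ldots,\xi_n=1\}$, and $\star$-translation by an element of $C_n$ only alters the first $n$ coordinates, a standard compactness argument produces an $n$ with $C_n\star K\subset U$. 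Then for any $v\in C_n$ I have $v\star K\subset U$ and $\PP(v\star K)=\PP(K)$ (translation-invariance of $\PP$ under the bimeasurable measure-preserving map $\omega\mapsto v\star\omega$), so $\PP(K)+\PP(v\star K)=2\PP(K)>\PP(U)$ forces $K\cap(v\star K)\ne\emptyset$; picking $\omega_1,\omega_2\in K$ with $\omega_1=v\star\omega_2$ gives $v=\omega_1\star\omega_2\in K\star K\subset A\star A$ (using $\omega_2=\omega_2^{-1}$). Hence $C_n\subset A\star A$, i.e. $A\star A$ contains the neighbourhood $C_n$ of $\mathbbm{1}_\mathbb{N}$.

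The only genuinely delicate point is the passage from positive \emph{inner} $\overline{\PP}$-measure (which is the hypothesis as it will be used — recall $A\subset\{X_1=1\}$ is merely assumed to have $\PP$-measure, i.e. positive inner measure) to the existence of a compact subset of positive measure: this is fine because inner measure of $A$ equals the supremum of $\PP(K)$ over compact $K\subset A$ (indeed over $\mathcal{B}_\Omega$-measurable subsets, and then Borel inner regularity finishes the job), so positive inner measure already yields such a $K$. After that, everything is the routine Steinhaus bookkeeping; the one thing to keep flagged is that we must use the $\star$-translation structure and its measure-invariance, not the additive structure of $[0,1]$, since $\Phi$ does not respect $\star$. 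I would therefore expect the write-up to be short, with the main obstacle being purely expository: making sure the reader sees that $(\Omega,\star)$ is the relevant group and that $\overline{\PP}$ is its Haar measure, after which the classical theorem (or the two-line covering argument above) applies verbatim.
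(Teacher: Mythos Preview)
Your proposal is correct and is essentially the same argument as the paper's own proof: both pass to a compact $K\subset A$ and an open $U\supset K$ with $2\PP(K)>\PP(U)$, produce via compactness a basic cylinder neighbourhood $W$ of $\mathbbm{1}_\mathbb{N}$ with $K\star W\subset U$, and then use translation-invariance of $\PP$ under $\star$ to force $(v\star K)\cap K\ne\emptyset$ for each $v\in W$, giving $W\subset K\star K\subset A\star A$. The only cosmetic differences are that the paper justifies regularity via the mod-$0$ isomorphism $\Phi$ with Lebesgue measure (you invoke compact metrizability directly), and you additionally remark that one could simply cite the abstract Steinhaus--Weil theorem for the compact group $(\Omega,\star)$ with Haar measure $\PP$, which the paper does not do explicitly.
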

\begin{proof}
It is nearly verbatim the proof of the usual Steinhaus theorem for Lebesgue measure (and actually even a little easier in places). We note that for each $k\in \Omega$, $(\Omega\ni\omega\mapsto \omega\star k\in \Omega)$ is both a measure-preserving bimeasurable bijection and a homeomorphism.

Let $K$ be compact and $U$ be open such that $K\subset A\subset U$ and $2\PP(K)>\PP(U)$; they exist because of the inner and outer regularity of $\overline{\PP}$ (inherited from the same property for the Lebesgue measure via the continuous  mod-$0$ isomorphism $\Phi$). For each $k\in K\subset U$ there is an open neighborhood $W_k$ of $\mathbbm{1}_\mathbb{N}$ of the form $\{\xi_1=\cdots= \xi_n=1\}$ (for some $n\in \mathbb{N}$) such that $k\star W_k\subset U$; note that $W_k\star W_k=W_k$. Then $\{k\star W_k:k\in K\}$ is an open cover of $K$; there is a finite subcover $\{k_1\star W_{k_1},\ldots, k_n\star W_{k_n}\}$ for some $k_1,\ldots,k_n$ from $K$ and $n\in \mathbb{N}$. Put $W:=W_{k_1}\cap \cdots \cap W_{k_n}$, an open neighborhood of $\mathbbm{1}_\mathbb{N}$.

We see that
$$K\star W\subset (\cup_{i=1}^n k_i\star W_{k_i})\star W\subset \cup_{i=1}^n k_i\star W_{k_i}\star W_{k_i}=\cup_{i=1}^n k_i\star W_{k_i}\subset U.$$
Let $w\in W$ and suppose $(K\star w)\cap K=\emptyset$. Then $2\PP(K)=\PP(K\star w)+\PP(K)\leq \PP(U)$,  a contradiction. It means that for every $w\in W$ we have $\{k_1,k_2\}\subset K\subset A$ such that $w\star k_1=k_2$, i.e. $w=k_1\star k_2$, whence $w\in K\star K$. So $W\subset K\star K\subset A\star A$. 
\end{proof}

As a final remark to this proof, notice that now that it has been established that $\{X_1=1\}$ has inner measure zero and outer measure one, the argument supplying the non-$\overline{\PP}$-measurability of $X_1$ becomes superfluous. Still it was quite natural to check the preceding first before attempting the nevertheless more elaborate proof of the saturatedness of $\{X_1=1\}$. 

Let us close this section by spending a little time on a complement to Theorem~\ref{theorem}, namely an analogue of it in which the random variables have diffuse laws.
\begin{remark}\label{continuous-space-analogue}
Consider the SDE with state space $S:=\{-1,1\}^\mathbb{N}$:
\begin{equation}\label{SDE:2}
Y_n=Y_{n-1}\star \eta_n,\quad n\in \mathbb{Z}_{\leq 0},
\end{equation}
where the $\eta_n$, $n\in \mathbb{N}$, are independent uniform (i.e. having law $(\frac{1}{2}\delta_1+\frac{1}{2}\delta_{-1})^{\times \mathbb{N}}$) on $S$,  and the $S$-valued process $(Y_n)_{n\in \ZZn}$ is to be solved for. Still $\star$ is coordinate-wise multiplication.

Note: a probability on $S$ (with the $\sigma$-field $(2^{\{-1,1\}})^{\mathbb{N}}$, of course) is uniform iff it is invariant under $\star$-multiplication (by constants). It follows easily that: (i) for all $n\in \mathbb{N}$, if under some probability the $U_1,\ldots, U_n$ are independent uniform on $S$, then so too are their running $\star$-products $U_1,\ldots, U_1\star \cdots\star  U_n$; (ii) if  $U$ and $V$ are $S$-valued and independent, one of which is uniform on $S$, then $U\star V$ is uniform on $S$ also. 

Suppose now we are given a weak solution to \eqref{SDE:2}, namely, on some filtered probability space, a pair  $(\eta,Y)$ of adapted $S$-valued processes such that \eqref{SDE:2} holds and such that for each $i\in\ZZn$, $\eta_i$ is uniform on $S$  and independent of $\FF_{i-1}$. Then from the observations (i)-(ii) preceding: 
the $Y_n$, $n\in \ZZn$, are independent uniform on $S$ (in particular they are diffuse); $Y_n$ is independent of $\FF^\eta_n$ (even of $\FF^\eta$) for each $n\in\ZZn$. 
There is thus uniqueness in law and no weak solution to \eqref{SDE:2} is strong. On the other hand, starting with a sequence  $Y=(Y_n)_{n\in \ZZn}$ consisting of independent random variables uniform on $S$ we construct at once a weak solution to \eqref{SDE:2}, just like it was done with \eqref{main}.  Moreover, to construct a weak solution to \eqref{SDE:2} that is ``non-anticipative'' in the noise $\eta$ one need simply take the product of $\mathbb{N}$ copies of $(\PP',X)$ as constructed above and proceed in the obvious manner. 
Therefore the phenomenon for \eqref{main} recorded in Theorem~\ref{theorem} persists in a setting with diffuse laws. 
\end{remark}

\section{Theorem~\ref{thm:kolmo}: a converse to Kolmogorov's zero-one law}\label{sec:kolmo}
We work in the setting of Theorem~\ref{thm:kolmo}. The equivalence of \ref{Kii} and \ref{Kiii} is by taking complements. 

Let $n\in \mathbb{N}$ and $\pi$ be a transposition (a transposition exchanges two elements, leaving the others unchanged) of $E_n$. Denote by $\theta_\pi^n:\prod_{m\in \mathbb{N}_{\geq n}}E_m\to \prod_{m\in \mathbb{N}_{\geq n}}E_m$ the map given by $\theta_\pi^n(e):=(\pi(e_n),e_{n+1},e_{n+2},\ldots)$ for $e\in \prod_{m\in \mathbb{N}_{\geq n}}E_m$, i.e. $\theta_\pi^n=\pi\otimes (\otimes_{m\in \mathbb{N}_{>n}} \id_{E_m})$. Clearly $\theta_\pi^n$ is a $(\otimes_{m\in \mathbb{N}_{\geq n}} 2^{E_m})$-bimeasurable involutive bijection.  Furthermore, assuming \ref{Ki}, we see that for all  $k\in \mathbb{N}$ and then for all $e_n\in E_n,\ldots, e_{n+k}\in E_{n+k}$, one has $\PP(\xi_n=e_n,\xi_{n+1}=e_{n+1}\ldots,\xi_{n+k}=e_{n+k})=\PP(\xi_n=e_n)\PP(\xi_{n+1}=e_{n+1})\cdots\PP(\xi_{n+k}=e_{n+k})=\frac{\PP(\xi_n=e_n)}{\PP(\xi_n=\pi(e_n))}\PP(\xi_n=\pi(e_n))\PP(\xi_{n+1}=e_{n+1})\cdots\PP(\xi_{n+k}=e_{n+k})=\frac{\PP(\xi_n=e_n)}{\PP(\xi_n=\pi(e_n))}\PP(\xi_n=\pi(e_n),\xi_{n+1}=e_{n+1},\ldots,\xi_{n+k}=e_{n+k})=\frac{\PP(\xi_n=e_n)}{\PP(\xi_n=\pi(e_n))}\PP(\pi(\xi_n)=e_n,\xi_{n+1}=e_{n+1},\ldots,\xi_{n+k}=e_{n+k})$. By an application of Dynkin's lemma we conclude that $((\xi_k)_{k\in \mathbb{N}_{\geq n}})_\star\PP=D_n\cdot \{[\theta^n_\pi((\xi_k)_{k\in \mathbb{N}_{\geq n}})]_\star\PP\}$, where $D_n:=\left(E_n\ni e\mapsto \frac{\PP(\xi_n=e)}{\PP(\xi_n=\pi(e))}\right)\circ \pr_n:\prod_{m\in \mathbb{N}_{\geq n}}E_m\to (0,\infty)$. It implies that the map $\theta_\pi^n$ preserves the $\PP$-law of $(\xi_k)_{k\in \mathbb{N}_{\geq n}}$ up to equivalence, in the sense that 

 \begin{quote} 
 $(\dagger) \qquad ((\xi_k)_{k\in \mathbb{N}_{\geq n}})_\star\PP\sim (\theta^n_\pi)_\star[((\xi_k)_{k\in \mathbb{N}_{\geq n}})_\star\PP]$.
\end{quote}
We will argue that as a consequence \ref{Kii} holds true. 

\begin{lemma}
Let $(X,\HH,\Theta)$ be a probability space and let $\theta=(\theta_i)_{i\in I}$ be a countable family of  measurable involutions of $X$ such that ${\theta_i}_\star\Theta\sim \Theta$ for each $i\in I$. Suppose an $X^*$ is $\Theta$-almost certain. Then there exists a $\Theta$-almost certain $X^{**}$ contained in $X^*$ that is invariant under $\theta_i$ for each $i\in I$ (i.e. $\theta_i(X^{**})=X^{**}$ for all $i\in I$).
\end{lemma}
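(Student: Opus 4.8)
The plan is to build $X^{**}$ from $X^*$ by a two–stage process: first intersect over all finite words in the involutions to get a set that is almost certain (using $\Theta$-quasi-invariance finitely many times), then take a countable intersection to obtain genuine invariance under each single $\theta_i$, and finally check that the resulting set is in fact invariant under the whole family.

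First I would reduce to the case where $I$ is the set of finite strings over the index set. Let $G$ be the set of all finite words $i_1 i_2\cdots i_m$ (including the empty word) in the alphabet $I$, and for such a word $w=i_1\cdots i_m$ write $\theta_w:=\theta_{i_1}\circ\cdots\circ\theta_{i_m}$ (with $\theta_{\emptyset}=\id_X$). Since each $\theta_i$ is $\Theta$-quasi-invariant (i.e.\ ${\theta_i}_\star\Theta\sim\Theta$), so is any finite composition $\theta_w$, because quasi-invariance is preserved under composition. Now set
$$X^{**}:=\bigcap_{w\in G}\theta_w(X^*).$$
Because $G$ is countable (finite words over a countable alphabet) and $\Theta(X\setminus X^*)=0$ implies $\Theta(X\setminus\theta_w(X^*))=\Theta(\theta_w(X\setminus X^*))=0$ by quasi-invariance of $\theta_w$ (note $\theta_w(X\setminus X^*)=X\setminus\theta_w(X^*)$ since each $\theta_i$, hence $\theta_w$, is a bijection), $X^{**}$ is a countable intersection of $\Theta$-almost certain sets and is therefore $\Theta$-almost certain. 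It is contained in $X^*$ since the empty word contributes $\theta_\emptyset(X^*)=X^*$.

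It remains to verify invariance: $\theta_i(X^{**})=X^{**}$ for each fixed $i\in I$. For the inclusion $\theta_i(X^{**})\subset X^{**}$, take $x\in X^{**}$; for any word $w$ we must show $\theta_i(x)\in\theta_w(X^*)$, equivalently $x\in\theta_i^{-1}\theta_w(X^*)=\theta_i\theta_w(X^*)=\theta_{iw}(X^*)$ (using $\theta_i^{-1}=\theta_i$ since $\theta_i$ is an involution, and that $iw\in G$); but $x\in X^{**}\subset\theta_{iw}(X^*)$, so this holds. Hence $\theta_i(X^{**})\subset X^{**}$. Applying $\theta_i$ to this inclusion and using $\theta_i^2=\id$ gives $X^{**}=\theta_i(\theta_i(X^{**}))\subset\theta_i(X^{**})$, so equality holds. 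This establishes all the required properties.

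The only genuine subtlety — and the point I would expect to be the main obstacle in a naïve approach — is that $\Theta$-almost-sure invariance is \emph{not} the same as genuine set-theoretic invariance, so one cannot simply take the $\sigma$-algebra of invariant sets and pick a representative; one really has to close $X^*$ up under the (countable) semigroup generated by the $\theta_i$ \emph{before} intersecting, and use that the involutive property $\theta_i^2=\id$ makes this generated object manageable (words rather than arbitrary group elements, and $\theta_i^{-1}=\theta_i$). Once that structural observation is in place, the measure-theoretic input is just countable subadditivity together with the elementary fact that a quasi-invariant bijection maps null sets to null sets.
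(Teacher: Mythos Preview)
Your proof is correct and in fact more streamlined than the paper's. Where you intersect in one stroke over the entire free monoid $G$ of finite words in the $\theta_i$, the paper proceeds by a triple induction: first it treats a single involution (setting $X^{**}:=X^*\setminus\theta(X\setminus X^*)$ and checking $\theta(X^{**})\subset X^{**}$), then bootstraps to finite $I$ by cycling repeatedly through $\theta_1,\ldots,\theta_n$ and intersecting the resulting nested sequence, and finally passes to countable $I$ by another diagonal intersection over the finite-family sets. Your approach buys a shorter argument and makes the underlying mechanism transparent (close $X^*$ under the countable semigroup action, then invariance is automatic from $iw\in G$); the paper's approach has the minor virtue of never needing to observe that compositions $\theta_w$ remain quasi-invariant and bimeasurable, since it only ever applies one $\theta_i$ at a time. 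One small quibble: your opening ``two-stage'' description does not quite match what you actually do (there is only one intersection), and you might note explicitly that $\theta_w(X^*)$ is measurable because $\theta_w^{-1}=\theta_{i_m}\circ\cdots\circ\theta_{i_1}$ is a composition of measurable maps --- this is implicit but worth a word.
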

\begin{proof}
Suppose first $I=\{1\}$; put $\theta:=\theta_1$ for short. Because $\theta_\star\Theta\sim \Theta$, the event $X^{**}:=X^{*}\backslash \theta(X\backslash X^{*})$ is $\Theta$-almost certain. Besides, $\theta(X^{**})=\theta(X^*)\backslash (X\backslash X^{*})\subset X^{*}\backslash \theta(X\backslash X^{*})=X^{**}$. Owing to $\theta$ being involutive it means that in fact $X^{**}=\theta(X^{**})$. 

Let now $I$ be finite and having at least two elements (the case $I=\emptyset$ is trivial), $I=\{1,\ldots,n\}$ for some $n\in \mathbb{N}_{\geq 2}$. Put $\Omega_0^*:=\Omega^*$. By the preceding, inductively, there are $\Theta$-almost certain and nonincreasing: $\Omega_1^{*}\in 2^{ \Omega^*_0}$ invariant under $\theta_1$, $\ldots$, $\Omega_n^{*}\in 2^{ \Omega^*_{n-1}}$ invariant under $\theta_n$; $\Omega_{n+1}^*\in 2^{ \Omega^*_n}$ invariant under $\theta_1$, $\ldots$, $\Omega_{2n}^*\in 2^{ \Omega^*_{2n-1}}$ invariant under $\theta_n$; and so on and so forth. Putting $\Omega^{**}:=\cap_{n\in\mathbb{N}}\Omega_n^*$ it is plain that $\Omega^{**}\in 2^{\Omega^*}$ is $\Theta$-almost certain. Besides, for each $i\in [n]$: $\theta_i(\Omega^{**})\subset \cap_{k\in \mathbb{N}_0}\Omega^*_{i+kn}=\Omega^{**}$; again by involutiveness it means that $\Omega^{**}$ is invariant under $\theta_i$.

Finally, consider $I=\mathbb{N}$. By what we have just shown, inductively, there is a nonincreasing sequence $(\Omega_n^*)_{n\in \mathbb{N}}$ of $\Theta$-almost certain sets  contained in $\Omega^*$ and with $\Omega_n^*$ invariant under $\theta_1,\ldots,\theta_n$ for each $n\in \mathbb{N}$. Therefore $\Omega^{**}:=\cap_{n\in\mathbb{N}}\Omega_n^*$ is $\Theta$-almost certain, contained in $\Omega^*$, and for each $n\in \mathbb{N}$, $\theta_n(\Omega^{**})\subset \cap_{m\in \mathbb{N}_{\geq n}}\Omega_m^*=\Omega^{**}$, whence $\Omega^{**}$ is also invariant under $\theta$. 
\end{proof}


Now, $\Omega^*\in \sigma(\xi)$ means that $\Omega^*=\xi^{-1}(E^*)$ for some $E^*\in \otimes_{m\in \mathbb{N}}2^{E_m}$; $E^*$ is $\xi_\star\PP$-almost certain because $\Omega^*$ is $\PP$-almost certain (by assumption). The number of transpositions of $E_1$ being denumerable, by the preceding lemma applied to $\xi_\star \PP$ and by $(\dagger)$ with $n=1$, there is a $\xi_\star \PP$-almost certain $E^{**}\in 2^{E^*}$ that is invariant under $\theta_\pi^1$ 
for any transposition $\pi$ of $E_1$. 
Therefore $E^{**}=E_1\times \pr_{\mathbb{N}_{\geq 2}}(E^{**})$ and so $\Omega_1^{**}:=\xi^{-1}(E^{**})\in \sigma(\xi\vert_{\mathbb{N}_{\geq 2}})$. Besides, $\Omega_1^{**}$ is $\PP$-almost certain and contained in $\Omega^*$. 
 
Because of  $(\dagger)$ (and the previous lemma) again, we may moreover proceed inductively to define a whole nonincreasing sequence $(\Omega^{**}_n)_{n\in \mathbb{N}}$ of $\PP$-almost certain sets with $\Omega^*\supset \Omega_n^{**}\in \sigma(\xi_{\mathbb{N}_{>n}})$ for each $n\in \mathbb{N}$. Clearly $\Omega^{**}:=\cap_{n\in \mathbb{N}}\Omega^{**}_n$ is $\PP$-almost certain and belongs to $\limsup_{n\to\infty}\sigma(\xi_n)$. Hence \ref{Kii} in fact holds true.

Suppose now \ref{Kiii} valid, $\xi$ ``sufficiently nice'' and, per absurdum, \ref{Ki} false. For some $n\in \mathbb{N}$ and $e\in E_n$, $\PP(\xi_n=e)=0$, so $\{\xi_n=e\}$ must be contained in a $\PP$-negligible event $B$ belonging to $\limsup_{k\to\infty}\sigma(\xi_k)$. But such $B$, because of the ``$\xi$ is sufficiently nice'' condition, will contain also $\{\xi_n=f\}$ for all $f\in E_n\backslash \{e\}$,  hence $\Omega$, a contradiction. This, together with the above, establishes Theorem~\ref{thm:kolmo}.

\begin{remark}\label{remark:ceteris-paribus}
If, ceteris paribus, for some $n\in \mathbb{N}$, the space $E_n$ is not countable, but rather comes equipped with a $\sigma$-field that contains the singletons (and w.r.t. which $\xi_n$ is a random element), then automatically $\PP(\xi_n=e)=0$ for some $e\in E_n$. By the same token as in the preceding paragraph we see that $\{\xi_n=e\}$ is a $\PP$-negligible event from $\sigma(\xi)$ that is contained in no $\PP$-negligible event of $\limsup_{k\to\infty}\sigma(\xi_k)$, provided of course $\xi$ is ``sufficiently nice''. Thus in this case no converse (in the spirit of Theorem~\ref{thm:kolmo})  to Kolmogorov's zero-one law can be hoped for.
\end{remark}

\begin{example}\label{example:suff-nice}
Let $X:=\{-1,1\}^\mathbb{N}\cup \{0_\mathbb{N}\}$ (where $0_\mathbb{N}$ is the constant $0$ on $\mathbb{N}$), let $\eta=(\eta_k)_{k\in \mathbb{N}}$ be the coordinate process on $X$, $\HH:=\sigma(\eta)$, $\Theta:=(\frac{1}{2}\delta_{-1}+\frac{1}{2}\delta_1)^{\times \mathbb{N}}$. The event $\{\eta_1=0\}=\{0_\mathbb{N}\}$ is $\Theta$-negligible. On the other hand, let $X^*$ be any $\Theta$-negligible event. Then a fortiori $X^*\backslash \{0_\mathbb{N}\}$ is $\Theta_{\{-1,1\}^\mathbb{N}}$-negligible. By Theorem~\ref{thm:kolmo} applied to the space $\{-1,1\}^\mathbb{N}$ it follows that $X^*\backslash \{0_\mathbb{N}\}$ is contained in a $\Theta_{\{-1,1\}^\mathbb{N}}$-negligible tail event of $\eta\vert_{\{-1,1\}^\mathbb{N}}$, hence also $X^*$ is contained in a $\Theta$-negligible tail event of $\eta$. Therefore \ref{Kiii} is met but \ref{Ki} fails (for the process $\eta$ on $(X,\HH,\Theta)$ and taking $E_n=\{-1,0,1\}$ for all $n\in \mathbb{N}$). It means that $\eta$ cannot be ``sufficiently nice'' (as it is not). 
\end{example}

\begin{example}\label{ex:indep}
Let $X:=\{-1,1\}^{\mathbb{N}_0}$, let $\eta=(\eta_k)_{k\in \mathbb{N}_0}$ be the coordinate process on $X$, $\HH:=\sigma(\eta)$, $\Theta$ a probability on $\HH$ under which $\eta_0$ is an equiprobable random sign, while conditionally on $\{\eta_0=1\}$ (resp. $\{\eta_0=-1\}$), the sequence $(\eta_k)_{k\in \mathbb{N}}$ is  that of the (additive) increments of a simple non-degenerate random walk $Z=(Z_n)_{n\in \mathbb{N}_0}$ on the integers  (with $Z_0=0$) that drifts to $\infty$ (resp. $-\infty$). The event $X^*:=\{\sup_{n\in \mathbb{N}_0}Z_n=\infty\}\cap \{\eta_0=-1\}$ is $\Theta$-negligible. If it were contained in a negligible tail event $X^{**}$ of $\eta$ (or even just in a negligible event of $\sigma(\eta\vert_\mathbb{N})$), then $X^{**}$ would contain the tail event $\{\sup_{n\in \mathbb{N}_0}Z_n=\infty\}$, however this event is not $\Theta$-negligible (it has indeed probability  a half). By Theorem~\ref{thm:kolmo} it follows that $\eta$ cannot be an independency (as it is not). 
\end{example}

\begin{example}\label{example:last}
For a ``positive'' example, let  $X:=\{-1,1\}^{\mathbb{N}}$, let $\eta=(\eta_k)_{k\in \mathbb{N}}$ be the coordinate process on $X$, $\HH:=\sigma(\eta)$, $\Theta$ a probability on $\HH$ under which $\eta$ is a sequence of independent equiprobable random signs. Let also $Z=(Z_n)_{n\in \mathbb{N}_0}$  be the random walk on the integers whose sequence of (additive) increments is $\eta$, $Z_0=0$.  The event $A:=\{\sup_{n\in \mathbb{N}_0}Z_n=\infty\}$ is $\Theta$-almost certain, but so is $X^*:=A\backslash \{\omega\}$ for any given $\omega\in A$; the first of these is a tail event, while the latter is evidently not (because if it were, then it would have to not contain any $x\in X$ that agrees eventually with $\omega$, whereas in fact every such $x$ that is $\ne \omega$ belongs to $X^*$, and there are many such $x$ /though we only need one/). Nevertheless, by Theorem~\ref{thm:kolmo}, $X^*$ must contain a $\Theta$-almost certain tail event of $\eta$; of course we can make one explicit immediately, namely $X^{**}:=\{\sup_{n\in \mathbb{N}_0}Z_n=\infty\}\backslash \{x\in X:x\text{ agrees eventually with }\omega\}$. 
\end{example}


\bibliographystyle{plain}
\bibliography{Biblio_noise}
\end{document}